\newtheorem{Th}{Theorem}[section]
\newtheorem{Prop}[Th]{Proposition}
\theoremstyle{definition}
\newtheorem{remark}[Th]{Remark}
\numberwithin{equation}{section}
\theoremstyle{definition}
\newtheorem*{proof_prop_2}{Proof of Proposition \ref{stopping_time_expectation}}
\newcommand{\pr}{\mathbb{P}}
\newcommand{\ex}{\mathbb{E}}
\newcommand{\eps}{\varepsilon}
\title{\textsc{Drift Control with Discretionary Stopping for a Diffusion}}
\author{
\textsc{Václav E. Beneš} 
\thanks{ 
\,\textsc{26 Taylor Street, Millburn, NJ 07041} (e-mail: {\it beneslav@gmail.com})}
\and
\textsc{Georgy Gaitsgori} 
\thanks{ 
\,\textsc{Columbia University, Department of Mathematics, 2990 Broadway, New York, NY 10027, USA} (e-mail: {\it gg2793@columbia.edu})}
\and
\textsc{Ioannis Karatzas}
\thanks{ 
\,\textsc{Columbia University, Department of Mathematics, 2990 Broadway, New York, NY 10027, USA} (e-mail: {\it ik1@columbia.edu})}
}
\date{\today}
\begin{document}
\maketitle

\begin{abstract}
    We consider stochastic control with discretionary stopping for the drift of a diffusion process over an infinite time horizon. The objective is to choose a control process and a stopping time to minimize the expectation of a convex terminal cost in the presence of a fixed operating cost and a control-dependent running cost per unit of elapsed time. Under appropriate conditions on the coefficients of the controlled diffusion, an optimal pair of control and stopping rules is shown to exist. Moreover, under the same assumptions, it is shown that the optimal control is a constant which can be computed fairly explicitly; and that it is optimal to stop the first time an appropriate interval is visited. 
    We consider also a constrained version of the above problem, in which an upper bound on the expectation of available stopping times is imposed; we show that this constrained problem can be reduced to an unconstrained problem with some appropriate change of parameters and, as a result, solved by similar arguments.
\end{abstract}

\noindent
 {\sl AMS  2020 Subject Classification:}  Primary 93E20, 60G40; Secondary 60J60.

\noindent
 {\sl Keywords:} Stochastic control, optimal stopping, one-dimensional diffusions, variational inequalities, expectation constraint.

\section{Introduction}
This paper solves explicitly a problem of stochastic control for the drift of a diffusion process, in which the controller is also allowed to stop the process and quit at any stopping time of his choice. In this way, the problem under consideration falls into the class of the so-called problems of \textit{control with discretionary stopping}, or of \textit{‘leavable control’} in the terminology of Dubins \& Savage \cite{DubSav65}, who were apparently the first to consider such problems in a formal mathematical setup.

The work of Dubins and Savage was then taken up by numerous authors, who explored problems of control with discretionary stopping from both theoretical and practical perspectives. For an in-depth understanding of the theoretical results in this area and for its historical development, we mention the works of Krylov \cite{Krylov80}, El Karoui \cite{ElKaroui81}, Bensoussan \& Lions \cite{BenLio82}. Additionally, valuable insights can be obtained from Morimoto \cite{Morimoto03}, Ceci \& Bassan \cite{CeciBas04}, Karatzas \& Zamfirescu \cite{KarZamf06}, and more recently, De Angelis \& Milazzo \cite{DeAngMil23}.
 
From the point of view of application, problems that combine control and stopping emerge in several contexts. For example, such problems arise naturally in target-tracking, when one is using control in order to steer a system close to a target but also has to decide when ‘close is close enough,’ engage the target, and ‘leave’ to avoid unnecessary costs. See the papers by Beneš \cite{Benes92}, who provides explicit solutions to linear–quadratic–gaussian problems when stopping is allowed, or Karatzas et al. \cite{KarOcoWanZer}, who discuss finite-fuel singular control problems with discretionary stopping.

Furthermore, such problems play an important role in Mathematical Finance, in particular, in the theory of option pricing under portfolio constraints. See the works by Karatzas \& Kou \cite{KarKou98}, Karatzas \& Wang \cite{KarWang00}, \cite{KarWang01}, Henderson \& Hobson \cite{HenHob08}, and Leung \& Sircar \cite{LeuSir09}, among others. We also highlight a close relationship of such problems to the intriguing concept of dynamically consistent utilities, introduced by Musiela \& Zariphopoulou \cite{MusZar07}, \cite{MusZar09}, \cite{MusZar10} (see also Berrier et al. \cite{Ber09}).

Additionally, mixed problems of control and stopping constitute a significant area of stochastic games of the principal/agent type, where one player oversees the system and the other player determines the termination time. These games involve players competing against each other, and, in this, they differ from the aforementioned problems. The development of the theory in this direction started with the paper \cite{MaiSud96p} and the book \cite{MaiSud96b} by Maitra \& Sudderth, in which a discrete-time setting was considered, and then was taken up by Kamizono \& Morimoto \cite{KamMor02}, who study variational inequalities for such games, Karatzas \& Zamfirescu \cite{KarZamf08}, who use martingale methods to prove existence results in this context, Bayraktar \& Huang \cite{BayHua12}, who show that the value function of such games is the unique viscosity solution to an appropriate Hamilton–Jacobi–Bellman equation, and others. Specific instances of these games are detailed in Karatzas \& Sudderth \cite{KarSud06}, \cite{KarSud01}, Weerasinghe \cite{Weer06}, Hernandez-Hernandez et al. \cite{HerSimZer15}, or more recently Dumitrescu et al. \cite{DumLeuTan21} in the context of Mean Field Games.

Problems that involve control with discretionary stopping are especially interesting when their solutions can be characterized explicitly. However, there are only very few results of this type. To mention some, Davis \& Zervos \cite{DavZer} consider and solve explicitly the infinite-fuel singular control problem of tracking Brownian motion with quadratic running and terminal costs, Karatzas \& Sudderth \cite{KarSud99} solve a problem in which one controls drift and variance coefficients of a diffusion process on an interval and aims to stop the process as to minimize an arbitrary continuous terminal cost. Karatzas \& Ocone \cite{KarOco02} solve explicitly a bounded-velocity control of a Brownian motion with discretionary stopping, while Ocone \& Weerasinghe \cite{OcoWeer08} solve a variance control problem for a diffusion process with linear drift, in which the variance is allowed to vanish.

\noindent
\underline{\textbf{Preview:}}
In this paper, we solve explicitly another problem of this type; namely, we consider stochastic control with discretionary stopping for the drift of a diffusion process, over an infinite time horizon. More specifically, we consider controlled diffusion processes $X(\cdot)$ on a filtered probability space $(\Omega, \mathcal{F}, \pr)$, $\mathbb{F} =\nobreak \{\mathcal{F}(t)\}_{t \ge 0}$, which satisfy
\begin{equation}
    X(t) = x + \int_{0}^{t} u(r)\mu(X(r)) \,dr + \int_{0}^{t} \sigma (X(r)) \, dW(r), \quad 0 \le t < \infty.
\end{equation}
Here $x \in \mathbb{R}$ is a given starting position, $W(\cdot)$ is an $\mathbb{F}$--Brownian motion, $u(\cdot)$ is an $\mathbb{F}$--progressively-measurable process, and $\mu: \mathbb{R} \to \mathbb{R}, \, \sigma: \mathbb{R} \to \mathbb{R}$ are given measurable functions. 

Our goal is to find a control process $u(\cdot)$ and a stopping time $\tau$, adapted to the filtration generated by the process $X(\cdot)$, so as to minimize the total expected cost
\begin{equation}\label{objective_to_mininmize}
    \mathbb{E}\left[k(X(\tau))+\int_{0}^{\tau} \psi(u(t)) \, dt + c \tau\right],
\end{equation}
where $c > 0$ is an ``operating'' cost per unit of time, $k(\cdot)$ is a ``terminal cost'', and $\psi(\cdot)$ is a “running cost of control”.

It turns out that, under appropriate assumptions, the above problem has a very simple and fairly explicit solution; and this is despite the fact that it is posed on an infinite time horizon and without any discounting. Specifically, if the equation \eqref{equation_for_stopping_point} below, which involves the cost functions $\psi(\cdot)$, $k(\cdot)$, and the parameters $c$ and $A$,
has a solution in the positive half-line, then the optimal control process $u^*(\cdot)$ is given by a suitable constant. The optimal stopping time $\tau^*$, in this case, is the first entrance time in some appropriate interval $[-s, s]$ (where the constant $s \ge 0$ is also computable fairly explicitly) by the optimal process $X^*(\cdot)$, i.e., 
\begin{equation}
    \tau^* = \inf \{ t \ge 0: |X^*(t)| \le s\}.
\end{equation}
If, on the contrary, the equation \eqref{equation_for_stopping_point} 
has no solution in the positive half-line, then the best thing to do is to \textit{stop at once}, i.e., select $\tau^* \equiv 0$; in this case, no control is ever exerted. 

In the last part of the paper, we consider a constrained version of the above problem. More specifically, we study the same controlled diffusion processes and again minimize the expected cost \eqref{objective_to_mininmize}. However, this time we minimize only over stopping times $\tau$ which satisfy $\ex[\tau] \le \alpha$, where $\alpha \ge 0$ is some fixed parameter. Using duality arguments, we show that such a constrained problem is actually equivalent to an unconstrained problem of the same type as the initial problem \eqref{objective_to_mininmize}, but with an appropriate change of parameters $c \mapsto c + \lambda$, where the constant $\lambda \ge 0$ is to be determined. As a result, the constrained problem has essentially the same solution as an unconstrained one, but with a possibly shifted stopping level $s$ and a scaled optimal control $u^*(\cdot)$.

The structure of the paper is as follows. We formalize our setup in Section \ref{section_model} and prove our main results in Section \ref{section_main_results}. Section \ref{section_constrained_problem} is devoted to the constrained problem. Another possible extension is discussed in the Appendix. We also mention some open problems in Section \ref{section_open_questions}.

\section{Model}\label{section_model}

As already mentioned, we consider a controlled diffusion process $X(\cdot)$ on a filtered probability space $(\Omega, \mathcal{F}, \pr)$, $\mathbb{F} =\nobreak \{\mathcal{F}(t)\}_{t \ge 0}$, which satisfies
\begin{equation}\label{controlled_sde}
    X(t) = x + \int_{0}^{t} u(r)\mu(X(r)) \,dr + \int_{0}^{t} \sigma (X(r)) \, dW(r), \quad 0 \le t < \infty.
\end{equation}
Here $x \in \mathbb{R}$ is a given starting position, $W(\cdot)$ is an $\mathbb{F}$--Brownian motion, $u(\cdot)$ is an $\mathbb{F}$--progressively-measurable process, and $\mu: \mathbb{R} \to \mathbb{R}, \, \sigma: \mathbb{R} \to \mathbb{R}$ are given measurable functions which satisfy the following assumptions:
\begin{enumerate}[label = \textbf{(A\arabic*)}]
    \item $\mu(\cdot)$ is even, and positive and differentiable on $(0, \infty)$, \label{ass_1}
    \item $\sigma(\cdot)$ is even, continuous, and bounded away from zero on $(\eps, \infty)$ for every $\eps > 0$, \label{ass_2}
    \item $\int_{0}^{t}\big[|u(r)\, \mu(X(r))|+\sigma^{2}(X(r))\big] d r < \infty$ holds almost surely, for every $0 \leq t < \infty$, \label{ass_3}
    \item $\mu'(x) \, \sigma^2(x) = A \, \mu^2(x)$ for some $A \ge 0$ and all $x \in [0, \infty)$, \label{ass_4}
    \item If $A = 0$ in \ref{ass_4}, then the function $\sigma(\cdot)$ is bounded.
    If $A > 0$ in \ref{ass_4}, then the function $\mu'(\cdot)$ is bounded away from zero on $(\varepsilon, \infty)$ for every $\varepsilon > 0$. \label{unf_assum}
\end{enumerate}
\begin{remark}
    The assumption that the functions $\mu(\cdot)$ and $\sigma(\cdot)$ are symmetric is made just for convenience of exposition and can be relaxed. The third assumption guarantees that the Lebesgue and Itô integrals in \eqref{controlled_sde} are well-defined. 
    
    \vspace{-7pt}
    The fourth assumption is an unusual and quite restrictive relation between the coefficients of the controlled diffusion \eqref{controlled_sde}. However, many special cases of interest do satisfy \ref{ass_4}. For instance, all processes with the constant drift term $\mu(\cdot) \equiv const$, or a geometric Brownian motion, for which $\mu(x) = \sigma(x) = x$ for all $x \in \mathbb{R}$. 

    \vspace{-7pt}
    The fifth assumption provides growth conditions on the functions $\mu(\cdot)$ and $\sigma(\cdot)$, which allow our arguments to work; it is conceivable that it may be relaxed.
\end{remark}

For a given initial position $x \in \mathbb{R}$, denote by $\mathcal{A}(x)$ the collection of all ``admissible processes'' $X(\cdot)$ that can be constructed this way. By this we mean that each element of $\mathcal{A}(x)$ is a quintuple consisting of a probability space $(\Omega, \mathcal{F}, \pr)$, a filtration $\mathbb{F} =\nobreak \{\mathcal{F}(t)\}_{t \ge 0}$ of sub-$\sigma$-algebras of $\mathcal{F}$, an $\mathbb{F}$--adapted Brownian motion $W(\cdot)$, an $\mathbb{F}$--progressively-measurable process $u(\cdot)$, and an $\mathbb{F}$--adapted process $X(\cdot)$ which satisfies \eqref{controlled_sde}. Note that, for every given $x \in \mathbb{R}$, the collection $\mathcal{A}(x)$ is not empty, since the assumption \ref{ass_2} implies the existence of a weak solution of the equation \eqref{controlled_sde} in case one sets $u(t) \equiv 0$ for all $t \ge 0$ (see, e.g., Theorems 5.4, 5.7, pp. 332, 335 in \cite{BMSC}). We will abuse notation and write $X(\cdot) \in \mathcal{A}(x)$, but keep always in mind that $X(\cdot)$ is part of a quintuple $\left((\Omega, \mathcal{F}, \pr), \mathbb{F}, W(\cdot), u(\cdot), X(\cdot) \right)$, also called ``weak solution'' of the equation \eqref{controlled_sde}, for some $\mathbb{F}$--progressively-measurable control process $u(\cdot)$ and some driving Brownian motion $W(\cdot)$ adapted to $\mathbb{F}$ as well. 

Now, for each element in $\mathcal{A}(x)$ with corresponding so-called ``state process'' $X(\cdot)$, let $\mathbb{F}^X \coloneqq \{\mathcal{F}^X(t), 0 \le t < \infty \}$ be the filtration generated by $X(\cdot)$, where $\mathcal{F}^X(t) \coloneqq \sigma(X(s), 0 \leq s \le t)$, and denote the collection of all $\mathbb{F}^X$--stopping times by $\mathcal{T}^X$.

With the above notation, the controller solves the following optimization problem. \textit{For a given initial position $x \in \mathbb{R}$, find an element $X^*(\cdot) \in \mathcal{A}(x)$ and an associated  stopping time $\tau^* \in \mathcal{T}^{X^*}$, such that the pair $(X^*(\cdot), \, \tau^*)$ minimizes the total expected cost}
\begin{equation}\label{total_cost}
    J(x, X(\cdot), \tau) \coloneqq \mathbb{E}\left[k(X(\tau))+\int_{0}^{\tau} \psi(u(t)) \, dt + c \tau\right]
\end{equation}
over all $X(\cdot) \in \mathcal{A}(x)$ and $\tau \in \mathcal{T}^X$. 
Here $c > 0$ is an ``operating'' cost per unit of time, $k(\cdot)$ a ``terminal cost'', and $\psi(\cdot)$ a “running cost of control.” We impose the following assumptions on these functions:
\begin{enumerate}[label = \textbf{(A\arabic*)}]
\setcounter{enumi}{5}
    \item both $k(\cdot)$ and $\psi(\cdot)$ are of class $C^2$, non-negative, even, and strictly convex with $\psi(0) = 0$, \label{ass_5}
    \item $\lim\limits_{x \to \infty} \psi'(x) = \infty$. \label{ass_6}
\end{enumerate}

Finally, we denote the value function of this problem by
\begin{equation}\label{value_function}
    V(x) \coloneqq \inf\limits_{\substack{X(\cdot) \in \mathcal{A}(x) \\ \tau \in \mathcal{T}^X}} J(x, X(\cdot), \tau).
\end{equation}
Note that, despite formally minimizing over all stopping times $\tau \in \mathcal{T}^X$, quite clearly, we need only consider stopping times $\tau \in \mathcal{T}^X$ such that $\ex[\tau] < \infty$, because $c > 0$.

\begin{remark}
    Of course, when $c = 0$, we can take $u^*(\cdot) \equiv 0$ and just wait until the first time $\tau_\eps$ the diffusion process $X^*(\cdot) = x + \int_0^\cdot \sigma(X^*(t)) \, dW(t)$ hits an arbitrarily small level $\eps > 0$ (which will happen eventually, i.e., $\pr(\tau_\eps < \infty) = 1$, because $\sigma(\cdot)$ is bounded away from zero on $[\eps, \infty)$ by the assumption \ref{ass_2}). This leads to $J(x, X^*(\cdot), \tau_\eps) = k(\eps)$ and consequently $V(x) = k(0)$ by the arbitrariness of $\eps$, the lowest achievable expected cost, since the cost function $\psi(\cdot)$ is non-negative with $\psi(0) = 0$, while $k(\cdot)$ attains its minimum at the origin.
\end{remark}

\begin{remark}\label{remark_wlog_positive}
    It is clear that, for any pair $(X(\cdot), \tau)$ and a stopping time $\tau_0 \coloneqq \inf\{t: X(t) = 0\}$, we have  $J(x, X(\cdot), \tau \wedge \tau_0) \le J(x, X(\cdot), \tau)$; i.e., it is never optimal to proceed once the origin has been reached. This follows from the fact that the terminal cost function $k(\cdot)$ attains its minimum at the origin, while the cost of control $\psi(\cdot)$ and the operating cost $c$ are non-negative. As a result,
    without loss of generality, our problem can be studied only on the positive half-line, and the complementary case can be obtained by symmetry.
\end{remark}

\begin{remark}
    Even though we minimize in \eqref{value_function} over stopping times $\tau \in \mathcal{T}^X$, it will become rather obvious to the reader that our results and the arguments of the next section still hold when the infimum in \eqref{value_function} is taken instead over all stopping times $\tau \in \mathcal{T}$, where $\mathcal{T}$ denotes the collection of \underline{all} stopping times of the filtration $\mathbb{F}$ in the quintuple $\left((\Omega, \mathcal{F}, \pr), \mathbb{F}, W(\cdot), u(\cdot), X(\cdot) \right)$. 
\end{remark} 

It is clear from the form of the criterion \eqref{total_cost} and the properties \ref{ass_5}, \ref{ass_6} of its cost functions, that it should make sense to stop immediately if starting sufficiently near the origin; but that it might also make sense to exert some wise control for a while if the starting position is way far from, and then stop the moment we have arrived sufficiently near, the origin. 

We are now ready to analyze the problem and substantiate the above intuition.

\section{Main results}\label{section_main_results}

To solve a stochastic control problem, one must compute its value function and provide a controlled process that attains this value. Therefore, we shall proceed as follows. First, we introduce variational inequalities, which, as we conjecture, the value function should satisfy. Secondly, using the assumptions \ref{ass_1}--\ref{ass_6}, we find a function that satisfies these inequalities and hence becomes a candidate for the value function of the problem. 

\vspace{-7pt}
The form of the candidate value function will immediately suggest candidates for the optimal control process and stopping time. Thus, our last step will be to show that the candidate value function is indeed the actual value function, i.e., that it provides a lower bound on the achievable cost in the optimization problem \eqref{total_cost}, and that the suggested controlled process and stopping time actually attain this lower bound.

\subsection{Variational inequalities}\label{subsec_var_inequalities}

Variational inequalities are common tools in problems of stochastic control with discretionary stopping (see, e.g., \cite{DavZer} or \cite{KarOcoWanZer}). Accordingly, we conjecture, and it will become clear from the proof of the main theorem below, that the value function $V(\cdot)$ should satisfy the following variational inequalities for all $x \in \mathbb{R}$:
\begin{enumerate}[label = \textbf{(\roman*)}]
    \item $k(x) - V(x) \ge 0$; \label{var_ineq_1}
    \item $\frac{1}{2}V''(x)\sigma^2(x) + \min\limits_{u \in \mathbb{R}}\Big[ u \,\mu(x)\, V'(x) + \psi(u) \Big] + c \ge 0$; \label{var_ineq_2}
    \item $\Big( k(x) - V(x)\Big)\left( \frac{1}{2}V''(x)\sigma^2(x) + \min\limits_{u \in \mathbb{R}}\Big[ u \,\mu(x)\, V'(x) + \psi(u) \Big] + c
    \right) = 0$. \label{var_ineq_3}
\end{enumerate}
Here, the first inequality follows clearly from the fact that we can stop the diffusion immediately, i.e., take $\tau \equiv 0$, and for such $\tau$ and any controlled process $X(\cdot)$ we get $J(x, X(\cdot), \tau) = k(x)$. The second is a Bellman--type inequality describing the optimization of the controlled drift. The third equality tells us that, for an optimal pair consisting of a controlled process and a stopping time to exist, we must at all times either control optimally or stop the process. 

The second inequality \ref{var_ineq_2} can be simplified as follows. Without loss of generality, consider the case $x \ge 0$ and observe that, for fixed $x$, the function $u \mapsto u \,\mu(x)\, V'(x) + \psi(u)$ is minimized at 
\begin{equation}\label{optimal_control_1}
    u = \xi\big(-\mu(x) \, V'(x)\big),
\end{equation}
where 
\begin{equation}\label{def_inversi_psi_prime}
    \xi(\cdot) \coloneqq \left(\psi'\right)^{-1}(\cdot)
\end{equation}
is the inverse of the strictly increasing function $\psi'(\cdot)$, whose existence is guaranteed by the assumptions \ref{ass_5}, \ref{ass_6}. Therefore, substituting for $u$ the quantity of \eqref{optimal_control_1} back into \ref{var_ineq_2}, we can rewrite this variational inequality as
\begin{equation}\label{variational_inequality_with_lambda}
     \frac{1}{2}V''(x)\sigma^2(x) + \eta\big(\mu(x) \, V'(x)\big) + c \ge 0,
\end{equation}
with the Legendre-type
\begin{equation}\label{eta_function}
    \eta(z) \coloneqq \min\limits_{u \in \mathbb{R}}\Big[ uz + \psi(u) \Big] = z \, \xi(-z) + \psi(\xi(-z))
\end{equation}
transform of $\psi(\cdot)$. We are now ready to solve the above variational inequalities. 

\subsection{Candidate value function}

For general cost functions $\psi(\cdot)$, the equation \eqref{variational_inequality_with_lambda} is a non-linear second-order differential equation. Thus, one wouldn't expect to be able to solve it explicitly and obtain an explicit form for the value function of the control problem. However, the key assumption \ref{ass_4} on the relation between $\mu(\cdot)$ and $\sigma(\cdot)$ allows us to get around this non-linearity by the following reasoning.

Let us focus attention on the case $x \ge 0$, since the case $x < 0$ can be dealt with similarly by the symmetry of the problem. Observe that if, for some $\gamma \in \mathbb{R}$, we have 
\begin{equation}\label{equation_on_clope}
    \eta(\gamma) - \frac{1}{2}A\gamma + c = 0,
\end{equation}
where $A$ is the constant from the assumption \ref{ass_4} and $\eta(\cdot)$ is the function in \eqref{eta_function}, then the same assumption implies that, for fixed $s > 0, \, b \in \mathbb{R}$, the function 
\begin{equation}\label{function_for_second_var_ineq}
    f(x) \coloneqq \gamma \int\limits_s^x \frac{dy}{\mu(y)} + b
\end{equation}
satisfies $\mu(x) f'(x) = \gamma$, as well as the equation
\begin{equation}
    \frac{1}{2}f''(x)\sigma^2(x) + \eta(\mu(x) \, f'(x)) + c = 0
\end{equation}
on the half-line $[s, \infty)$.
In other words, the function $f(\cdot)$ in \eqref{function_for_second_var_ineq} satisfies the variational inequality \ref{var_ineq_2} in the form \eqref{variational_inequality_with_lambda}, as well as the variational equality \ref{var_ineq_3} on $[s, \infty)$. 

At the same time, we note that the choice $V(\cdot) = k(\cdot)$ satisfies the inequality \ref{var_ineq_1} and the variational equality \ref{var_ineq_3} in the complementary region $[0, s)$. As a result of these considerations, we put together the following candidate for value function:
\begin{equation}\label{value_function_candidate}
    V(x) = 
    \begin{cases}
        k(x),  & 0 \leq x < s, \\
        \gamma \int_s^x \frac{dy}{\mu(y)} + b, & x \ge s, \\
        V(-x),  & x<0,
    \end{cases}
\end{equation}
where the real constants $s > 0, b$ and $\gamma$ need to be specified. 

To find these constants, we use the heuristic principle of so-called ``smooth fit'', which mandates that the value function $V(\cdot)$ should be of class $C^1$. We comply with this requirement, and also with the equation \eqref{equation_on_clope}, by requiring that the real constants $s > 0, b$ and $\gamma$ obey the following system of equations:
\begin{equation}
    \begin{cases}
        k(s) = b, \\
        k'(s) = \gamma / \mu(s), \\
        \eta(\gamma) - \frac{1} {2}A\gamma + c = 0.
    \end{cases}
\end{equation}
Therefore, let $s>0$ be the solution to the scalar equation
\begin{equation}\label{equation_for_stopping_point}
    \eta(\mu(s) \, k'(s)) -\frac{1}{2}A\, \mu(s) \, k'(s) + c = 0,
\end{equation}
which imitates \eqref{equation_on_clope} with $\gamma = \mu(s) \, k'(s)$ according to the second of the equations right above, and set $s \coloneqq \infty$ if the equation \eqref{equation_for_stopping_point} doesn't have a solution in $(0, \infty)$. 

Note that the equation \eqref{equation_for_stopping_point} has at most one positive solution. This is because, on $(0, \infty)$, the function $\mu(\cdot) \, k'(\cdot)$ is strictly increasing, as can be verified easily via \ref{ass_1}, \ref{ass_4}--\ref{ass_5}; while the function $\eta(\cdot)$ of \eqref{eta_function} is strictly decreasing, as can shown by observing from \eqref{eta_function} that $\eta(\cdot)$ satisfies 
\begin{equation}\label{properties_of_eta}
    \eta'(z) = \xi(-z) < 0 \quad \text{ for all } z > 0.
\end{equation}
In particular, the function 
\begin{equation}\label{def_zeta_function}
    \zeta(z) \coloneqq \eta(\mu(z) \, k'(z)) -\frac{1}{2}A\, \mu(z) \, k'(z),
\end{equation}
is strictly decreasing, so the solution $s = \zeta^{-1}(-c) \eqqcolon s_c$ of the scalar equation \eqref{equation_for_stopping_point} increases as $c > 0$ increases. This makes good intuitive sense: the higher the ``operating'' cost $c$ per unit of elapsed time, the bigger the stopping threshold level $s$.

Putting $b = k(s)$ and $\gamma = \mu(s) \, k'(s)$, we rewrite \eqref{value_function_candidate} in the form
\begin{equation}\label{value_function_solution}
    V(x) = 
    \begin{cases}
        k(x),  & 0 \leq x < s, \\
        k(s) + \mu(s) \, k'(s) \int_s^x \frac{dy}{\mu(y)}, & x \ge s, \\
        V(-x),  & x<0,
    \end{cases}
\end{equation}
and claim that the so-defined function $V(\cdot)$ is indeed the value function of the control problem under consideration. 
Note that we set $s = \infty$ if the equation \eqref{equation_for_stopping_point} doesn't have a solution in $(0, \infty)$, thus the function $V(\cdot)$ in \eqref{value_function_solution} becomes $V(\cdot) \equiv k(\cdot)$ in this case.

Before stating our main result, which justifies the above claim, let us verify first that the function $V(\cdot)$ defined in \eqref{value_function_solution} indeed solves the variational inequalities \ref{var_ineq_1}--\ref{var_ineq_3} of \S \ref{subsec_var_inequalities}. Inequality \ref{var_ineq_1} is obviously satisfied (as equality) on $[-s, s]$, and is satisfied on $(-\infty, -s) \cup (s, \infty)$ because for all $x \ge s$ we have
$$
k'(x) \ge k'(s) \ge \mu(s) k'(s) \frac{1}{\mu(x)} = \left(k(s)+ \mu(s) \, k'(s) \int_s^x \frac{dy}{\mu(y)}\right)' = V'(x),
$$
where the first inequality is due to the convexity of $k(\cdot)$, and the second inequality is due to monotonicity of $\mu(\cdot)$ on $(0, \infty)$. 
Inequality \ref{var_ineq_2} is obviously satisfied on $(-\infty, -s) \cup (s, \infty)$ by construction. 

(a) If $s < \infty$, then on $[-s, s]$ the inequality \ref{var_ineq_2} follows from the chain of inequalities
\begin{equation}
\begin{split}
    \frac{1}{2}k''(x)\sigma^2(x) + \min\limits_{u \in \mathbb{R}}\Big[ u \,\mu(x)\, k'(x) + \psi(u) \Big] + c &=
    \frac{1}{2}k''(x)\sigma^2(x) + \eta(\mu(x)\, k'(x)) + c \\
    &\ge
    \eta(\mu(x)\, k'(x)) + c \\
    &\ge
    \eta(\mu(s)\, k'(s)) + c \\
    &= \frac{1}{2}A\, \mu(s)\, k'(s) \\
    &\ge 0.
\end{split}
\end{equation}
Here, the first inequality follows from the convexity of $k(\cdot)$, the second inequality follows from the fact that  $\mu(\cdot)\, k'(\cdot)$ is an increasing function on $(0, \infty)$, while $\eta(\cdot)$ is a decreasing function on $(0, \infty)$ on account of \eqref{properties_of_eta}. The equality on the fourth line follows by the choice of $s$ and \eqref{equation_for_stopping_point}, and the last inequality follows by the monotonicity of $k(\cdot)$ and the non-negativity of $A$ and $\mu(\cdot)$. 

(b) If, on the other hand, $s = \infty$, i.e., the equation \eqref{equation_for_stopping_point} does not have a positive solution, then the same arguments can be applied with the only correction that $\eta(\mu(x) \, k'(x)) + c \ge 0$ holds now for all $x \in \mathbb{R}$ by the non-existence of a solution to \eqref{equation_for_stopping_point}. 

Finally, the inequality \ref{var_ineq_3} is satisfied by the choice of $s$ and \eqref{equation_for_stopping_point}. 

\subsection{Main Theorem}

We are now ready to state our main result. 

\begin{Th}\label{main_result}
    With the above assumptions and notation, the value function $V(\cdot)$ for the stochastic control problem of \eqref{value_function} with discretionary stopping is given by the expression of \eqref{value_function_solution}. 
    
    Moreover, the infimum in the definition \eqref{value_function} of the value function is attained, and there are two cases:
    
    \underline{Case 1.} If the equation \eqref{equation_for_stopping_point} has no positive solution, then the stopping time $\tau^* \equiv 0$ is optimal, and no control is ever exerted. 

    \underline{Case 2.} If the equation \eqref{equation_for_stopping_point} has a solution $s > 0$, then this solution is unique and the constant control
    \begin{equation}\label{optimal_control}
        u^*(t) \equiv Q(s), \quad \forall \,\, 0 \le t < \infty \quad \quad \text{with} \quad \quad 
        Q(s) \coloneqq \left(\psi'\right)^{-1}\big(-\mu(s) k'(s)\big)
    \end{equation}
    is optimal; and so is the stopping time
    \begin{equation}\label{optimal_stopping_time}
        \tau^* \coloneqq \inf \{t \geq 0: |X^*(t)| \leq s\},
    \end{equation}
    i.e., the first entrance time in the interval $[-s, s]$ by the process $X^*(\cdot)$.
    
    Moreover, a weak solution to the equation \eqref{controlled_sde} with the given control process of \eqref{optimal_control} and stopping time of \eqref{optimal_stopping_time} exists.
\end{Th}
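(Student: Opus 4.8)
The plan is to establish Theorem~\ref{main_result} as a \emph{verification} result. Since we have already checked that the function $V(\cdot)$ of \eqref{value_function_solution} solves the variational inequalities \ref{var_ineq_1}--\ref{var_ineq_3}, it remains to prove the two inequalities $V(x)\le J(x,X(\cdot),\tau)$ for \emph{every} admissible pair and $V(x)\ge J(x,X^*(\cdot),\tau^*)$ for the proposed pair; their combination identifies $V(\cdot)$ with the value function of \eqref{value_function} and simultaneously exhibits an optimizer. Throughout I work on $\{x\ge 0\}$, as allowed by Remark~\ref{remark_wlog_positive}, and may assume $\ex[\tau]<\infty$, since otherwise $c>0$ forces $J=+\infty\ge V(x)$ trivially.

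\textbf{Lower bound.} Fix any $X(\cdot)\in\mathcal A(x)$ and $\tau\in\mathcal T^X$. The function $V(\cdot)$ is of class $C^1$ with an absolutely continuous derivative (it is $C^2$ away from $\pm s$ and only $C^1$ there, by smooth fit), so the generalized It\^o formula applies and produces no local-time contribution at $\pm s$:
\begin{equation}
V\big(X(t\wedge\tau)\big)=V(x)+\int_0^{t\wedge\tau}V'(X(r))\,\sigma(X(r))\,dW(r)+\int_0^{t\wedge\tau}\Big[\tfrac12 V''\sigma^2+u(r)\,\mu\,V'\Big](X(r))\,dr.
\end{equation}
By the Bellman inequality \ref{var_ineq_2}, the last integrand is bounded below by $-\psi(u(r))-c$. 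I would then localize the stochastic integral by stopping times $\tau_n\uparrow\infty$, take expectations to annihilate the martingale part, and pass to the limit in $n$ and in $t$. The available tools are the a priori bound $|V'|\le k'(s)$ (valid because $\mu(\cdot)$ is increasing on $(0,\infty)$, whence $|V'(x)|=\mu(s)\,k'(s)/\mu(x)\le k'(s)$ for $x\ge s$, and $|k'|\le k'(s)$ on $[0,s]$ by convexity), the non-negativity $V\ge 0$, and the constraint \ref{var_ineq_1} giving $V(X(\tau))\le k(X(\tau))$. Modulo the integrability discussed below, this yields $V(x)\le \ex\big[k(X(\tau))+\int_0^\tau\psi(u)\,dt+c\,\tau\big]=J(x,X(\cdot),\tau)$, and taking the infimum gives $V(x)\le$ value function.

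\textbf{Upper bound and optimality.} In Case~1 ($s=\infty$) we have $V\equiv k$, and choosing $\tau^*\equiv 0$ gives $J(x,X(\cdot),0)=k(x)=V(x)$, so $\tau^*\equiv 0$ is optimal and no control is exerted. In Case~2 the decisive observation is that on the continuation region $\{x>s\}$ one has $\mu(x)\,V'(x)\equiv \mu(s)\,k'(s)$, a \emph{constant}; hence the pointwise minimizer $\xi\big(-\mu(x)V'(x)\big)$ of \eqref{optimal_control_1} equals the \emph{constant} $Q(s)$ of \eqref{optimal_control} throughout $\{x>s\}$. Therefore running \eqref{controlled_sde} with the constant control $u^*\equiv Q(s)$ turns the Bellman inequality into the equality $\tfrac12V''\sigma^2+u^*\mu V'+\psi(u^*)+c=0$ on $\{x>s\}$, so the It\^o computation above becomes an exact identity up to the first-entrance time $\tau^*$ of \eqref{optimal_stopping_time}. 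At $\tau^*$ one has $|X^*(\tau^*)|=s$ by path-continuity, so $V(X^*(\tau^*))=k(s)=k(X^*(\tau^*))$, and letting $t\to\infty$ produces $V(x)=\ex\big[k(X^*(\tau^*))+\int_0^{\tau^*}\psi(Q(s))\,dt+c\,\tau^*\big]=J(x,X^*(\cdot),\tau^*)$. Combined with the lower bound, this proves optimality of the pair $(X^*(\cdot),\tau^*)$ and the formula \eqref{value_function_solution}; weak existence of $X^*(\cdot)$ for the constant drift $Q(s)\,\mu(\cdot)$ and the continuous, non-degenerate $\sigma(\cdot)$ follows from the same weak-existence results invoked earlier for $u\equiv 0$. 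Uniqueness of $s$ is immediate from the strict monotonicity of $\zeta(\cdot)$ in \eqref{def_zeta_function}.

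\textbf{Main obstacle.} The crux is the passage to the limit $t\to\infty$ in both bounds, which hinges on two integrability facts: that $\ex[\tau^*]<\infty$, and that the stopped trajectories do not escape to $+\infty$ in a manner that invalidates the martingale and uniform-integrability arguments (a Fatou estimate alone points the wrong way, so genuine domination is required). The first I would obtain from the \emph{restoring} nature of the optimal drift: since $k'(s)>0$ and $\psi'(\cdot)$ is odd and strictly increasing, we have $Q(s)<0$, so $Q(s)\,\mu(X^*(r))<0$ on $\{X^*>0\}$ pushes $X^*(\cdot)$ toward $[-s,s]$, and a scale-function or Lyapunov estimate should show that the first-entrance time into $[-s,s]$ has finite expectation. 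The second is precisely where assumption \ref{unf_assum} enters: when $A=0$ it bounds $\sigma(\cdot)$, while when $A>0$ it forces $\mu'(\cdot)$ to be bounded away from zero and thereby controls $\sigma^2=A\mu^2/\mu'$, which is what I expect to be needed to verify that $\int_0^{\cdot}V'(X)\sigma(X)\,dW$ is a true martingale and that the family $\{V(X^*(t\wedge\tau^*))\}_{t\ge0}$ is uniformly integrable. These estimates are the only genuinely technical ingredient; everything else follows directly from the variational inequalities already verified.
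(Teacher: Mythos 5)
Your proposal is correct and follows essentially the same route as the paper: a verification argument built on the variational inequalities, the generalized It\^o formula, the bound on $V'(X)\sigma(X)$ obtained from \ref{ass_4}--\ref{unf_assum} (via $\sigma^2/\mu^2=A/\mu'$) to make the stochastic integral a true ($\mathbb{L}^2$-bounded) martingale, and equality for the constant control $Q(s)$ on the continuation region where $\mu V'$ is constant. The two technical points you defer are exactly the ones the paper supplies --- the $\mathbb{L}^2$ estimate $\ex[\langle M\rangle(\infty)]\le C\,\ex[\tau]$ in Step I, and the scale-function/speed-measure computation of $\ex[\tau^*]<\infty$ in Proposition \ref{stopping_time_expectation} --- so your sketch matches the paper's proof in both structure and substance.
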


\begin{proof}
We first show that the function $V(\cdot)$ given by \eqref{value_function_solution} is a lower bound on the achievable cost, and then use the pairs described in Theorem \ref{main_result} to construct the process and the stopping time, which attain this lower bound. 

\noindent
\textbf{I. Verification for the Value Function}

Consider any process $X(\cdot)$ that can be constructed on some filtered probability space $(\Omega, \mathcal{F}, \pr)$, $\mathbb{F} \nobreak= \{\mathcal{F}(t)\}_{t \ge 0}$, rich enough to support also an $\mathbb{F}$--Brownian motion $W(\cdot)$, so as to satisfy \eqref{controlled_sde} for some $\mathbb{F}$--progressively measurable control process $u(\cdot)$. 
Following the arguments of Remark \ref{remark_wlog_positive}, we assume, without loss of generality, that $x \ge 0$, and let $\tau$ be any $\mathbb{F}^X$--stopping time such that $\tau \le \tau_0 \coloneqq \inf\{t: X(t) = 0\}$ almost surely. In particular, we note that the process $X(\cdot \wedge \tau)$ is almost surely non-negative. Moreover, we can assume that $\ex[\tau] < \infty$, since otherwise the expected cost \eqref{total_cost} is automatically infinite and such a stopping time cannot be optimal, because the cost $k(x)$ associated with immediate stopping $\tau \equiv 0$ is finite.

Consider now the process $V(X(\cdot))$. Since $X(\cdot)$ is an $\mathbb{F}$--semi-martingale and the function $V(\cdot)$ is almost everywhere $C^2$ (except possibly at $\pm s$), in particular, has derivative absolutely continuous with respect to the Lebesgue measure, we have, on the strength of the  Itô's rule in Problem 7.3, p. 219 of \cite{BMSC}:
\begin{equation}\label{Ito}
    \begin{split}
        V(X(\tau)) - V(x) 
        &= 
        \int\limits_{0}^{\tau} V'(X(t)) \, dX(t) + \frac{1}{2}\int\limits_{0}^{\tau} V''(X(t))\, \sigma^2(X(t))\, dt \\
        &=
        \int\limits_{0}^{\tau} V'(X(t))\, \sigma(X(t)) \, dW(t) \\ 
        & \quad \quad \quad \quad + 
        \int\limits_{0}^{\tau} \left\{  V'(X(t))\, u(t)\mu(X(t))+ \frac{1}{2} V''(X(t))\, \sigma^2(X(t))\right\}\, dt.
        \end{split}
\end{equation}
To proceed further, we consider separately the local martingale 
\begin{equation}
    M(t) \coloneqq \int\limits_{0}^{\tau \wedge t} V'(X(r))\, \sigma(X(r)) \, dW(r), \quad 0 \le t < \infty,
\end{equation}
and show that it is in fact a bounded in $\mathbb{L}^2$ martingale. To do that, we note that it has quadratic variation 
\begin{equation}\label{stoch_integral_quadratic_variation}
    \langle M \rangle (t)=
        \int\limits_{0}^{\tau \wedge t} \Big(V'(X(r))\, \sigma(X(r))\Big)^2 \, dr,
\end{equation}
which is integrable at infinity, i.e., $\mathbb{E}[\langle M \rangle(\infty)] < \infty$. The last statement follows from the boundedness of the integrand in the expression \eqref{stoch_integral_quadratic_variation}, which can be shown by the following arguments.

We consider the events $\{X(r) < s\}$ and $\{X(r) \ge s\}$ separately. On $\{X(r) < s\}$, we have the upper bound
\begin{equation}\label{first_upper_bound}
    \Big(V'(X(r))\, \sigma(X(r))\Big)^2 \le \Big(k'(s) \cdot \max\limits_{0 \le x \le s} \sigma(x)\Big)^2 < \infty
\end{equation}
due to the convexity of $k(\cdot)$ and the continuity of $\sigma(\cdot)$. 

Whereas on $\{X(r) \ge s\}$, we first write 
\begin{equation}
    \Big(V'(X(r))\, \sigma(X(r))\Big)^2 =
        \big(\mu(s) k'(s)\big)^2 \, \frac{\sigma^2(X(r))}{\mu^2(X(r))},
\end{equation}
and then, in the case $A > 0$, obtain the upper bound
\begin{equation}\label{second_upper_bound_a}
    \begin{split}
        \big(\mu(s) k'(s)\big)^2 \, \frac{\sigma^2(X(r))}{\mu^2(X(r))} 
        &= 
        \big(\mu(s) k'(s)\big)^2 \, \frac{A}{\mu'(X(r))}
        \\ &\le
        \big(\mu(s) k'(s)\big)^2 \, \frac{A}{\min\limits_{s \le q < \infty} \mu'(q)} < \infty.
    \end{split}
\end{equation}
Here the equality follows from the assumption \ref{ass_4}, and the second inequality from the assumption \ref{unf_assum}. Whereas, in the case $A = 0$, we note that the assumption \ref{ass_4} implies $\mu'(x) = 0$ for all $x > 0$, and, consequently, for all $x > 0$ we have $\mu(x) \equiv \mu$ for some positive real constant $\mu$, leading to the upper bound
\begin{equation}\label{second_upper_bound_b}
     \big(\mu(s) k'(s)\big)^2 \, \frac{\sigma^2(X(r))}{\mu^2(X(r))}
     = (k'(s))^2 \cdot \Sigma < \infty,
\end{equation}
where $\Sigma > 0$ is the real upper bound on $\sigma^2(\cdot)$, courtesy of the assumption \ref{unf_assum} again.

Combining these upper bounds, we deduce that there exists a positive real constant $C$ with
\begin{equation}
    \ex [M^2(t)] \le \ex [\langle M \rangle(t)] \le \mathbb{E}[\langle M \rangle(\infty)] \le C \cdot \ex [\tau] < \infty, \quad \forall \, t \ge 0,
\end{equation}
where the first inequality is well-known for local martingales (see, e.g., p. 38 in \cite{BMSC}), and the second follows from the monotonicity of quadratic variation. As a result, we obtain that $M(\cdot)$ is in fact a bounded in $\mathbb{L}^2$ martingale.

Returning to the equation \eqref{Ito}, by taking expectations on both sides we obtain 
\begin{equation}\label{value_function_as_lower_bound}
    \begin{split}
        V(x) &= \ex \left[V(X(\tau)) - \int\limits_{0}^{\tau} \left\{  V'(X(r))u(r)\mu(X(r))+ \frac{1}{2} V''(X(r))\sigma^2(X(r))\right\}\, dr \right]\\
        &\le
        \ex \left[k(X(\tau)) + \int\limits_{0}^{\tau}  \psi(u(r)) \, dr + c\, \tau \right]
        =
        J(x, X(\cdot), \tau).
    \end{split}
\end{equation}
Here the inequality on the second line follows from the fact that $V(\cdot)$ satisfies the variational inequalities \ref{var_ineq_1}--\ref{var_ineq_2}, while the equality on the first line uses the fact that the expectation of the stochastic integral in \eqref{Ito} vanishes due to the optional sampling theorem for martingales bounded in $\mathbb{L}^2$. Therefore, $V(x)$ is indeed a lower bound on the achievable cost. 

It remains to show that the pair of control process and stopping time, described in Theorem \ref{main_result}, attains this lower bound $V(x)$ in \eqref{value_function_as_lower_bound}.

\noindent
\textbf{II. Verification of Optimality}

We note first that, if the equation \eqref{equation_for_stopping_point} has no solution on $(0, \infty)$, then the value function $V(\cdot)$ of \eqref{value_function_solution} is identically equal to $k(\cdot)$; and thus, by taking $\tau^* \equiv 0$, we obviously attain the lowest possible expected cost. 

Therefore, we assume from now onwards that \eqref{equation_for_stopping_point} has a solution $s \in (0, \infty)$. In this case, let $u^*$ be the constant control as in \eqref{optimal_control}, consider a corresponding controlled process $X^*(\cdot)$, and let the stopping time $\tau^*$ be defined as in \eqref{optimal_stopping_time}. For such a pair $(X^*(\cdot), \tau^*)$, the inequality in \eqref{value_function_as_lower_bound} becomes an equality. Indeed, by the definitions of the stopping time $\tau^*$ and the value function $V(\cdot)$, we have $k(X^*(\tau^*)) = k(s) = V(s) = V(X^*(\tau^*))$. Moreover, the process $X^*(\cdot \wedge \tau^*)$ takes values in the region $[s, \infty)$, on which the third variational inequality \ref{var_ineq_3} is actually an equality. As a result, the inequality in \eqref{value_function_as_lower_bound} also becomes equality, which implies that the pair $(X^*(\cdot), \tau^*)$ attains the lowest possible achievable cost and thus is optimal.
\hfill \qed {\parfillskip0pt\par}

\subsection{An Alternative Argument of Verification of Optimality}

Even though the above arguments complete the proof of the \textit{Verification of Optimality} step, we find the following alternative arguments for this step of considerable independent interest and, therefore, include them in the paper. Moreover, these arguments will be necessary for the subsequent treatment of a constrained version of the problem under consideration. Its proof is given in Appendix \ref{appendix_proof}.

We consider again the case when the equation \eqref{equation_for_stopping_point} has a solution $s \in (0, \infty)$. Then, the following result allows us to complete the proof.
\begin{Prop}\label{stopping_time_expectation}
    Consider a diffusion process of the form
    \begin{equation}\label{proposition_diffusion}
        \begin{split}
        dX(t) &= u\, \mu(X(t)) \,dt + 
        \sigma (X(t)) \,
        dW(t), \quad 0 \le t < \infty,\\
        X(0) &= x,
    \end{split}
    \end{equation}
    where $x \ge 0$, $u < 0$ are real constants, and the functions $\mu(\cdot)$ and $\sigma(\cdot)$ satisfy the assumptions \ref{ass_1}--\ref{unf_assum}. Then, for $\, 0 < s \le x < \infty$, the $\mathbb{F}^X$--stopping time
    \begin{equation}\label{one_sided_hitting_time}
        \tau \coloneqq \inf\{t \ge 0 : X(t) \le s \}
    \end{equation}
    satisfies \begin{equation}\label{stop_time_expectation_formula}
        \mathbb{E}[\tau] = \frac{2}{A - 2u}\int_s^x \frac{dy}{\mu(y)} < \infty.
    \end{equation}
\end{Prop}
With this result at hand, we note that with constant control $u^*$ as in \eqref{optimal_control} and stopping time $\tau^*$ as in \eqref{optimal_stopping_time}, the expected payoff, corresponding to the pair $(X^*(\cdot), \tau^*)$, is given by 
\begin{equation}\label{chain_for_J}
    \begin{split}
        J(x, X^*(\cdot), \tau^*) 
        &=
        \mathbb{E}\left[ k(X^*(\tau^*)) + \int_0^{\tau^*} \psi(u^*) \, ds + c\tau^* \right] 
        \\ &= 
        \mathbb{E}\Big[ k(s) + \tau^*(c + \psi(u^*)) \Big]
        \\ &=
        k(s) + (c + \psi(u^*)) \cdot \frac{2}{A - 2u^*}\int_s^x \frac{dy}{\mu(y)} 
        \\ &=
        k(s) + \mu(s) \, k'(s) \int_s^x \frac{dy}{\mu(y)},
    \end{split}
\end{equation}
which is exactly the expression for the value function \eqref{value_function_solution}. 

\vspace{-7pt}
In the above chain of equalities, the first and second follow from the definitions of $J(x, X^*(\cdot), \tau^*)$ in \eqref{total_cost} and $u^*(\cdot)$ in \eqref{optimal_control}, whereas the third is a consequence of \eqref{optimal_stopping_time} and Proposition \ref{stopping_time_expectation}. 

\vspace{-7pt}
The last equality in \eqref{chain_for_J} follows from the scalar equation \eqref{equation_for_stopping_point} and the definition of the optimal control \eqref{optimal_control}. Indeed, using the definitions of the functions $\eta(\cdot)$ in \eqref{eta_function}, $\xi(\cdot)$ in \eqref{def_inversi_psi_prime}, and $Q(\cdot)$ in \eqref{optimal_control}, the equation \eqref{equation_for_stopping_point} can be written as
\begin{equation}
    \mu(s) \, k'(s) \, Q(s) + \psi\left(Q(s)\right)-\frac{1}{2}A\, \mu(s) \, k'(s) + c = 0,
\end{equation}
or equivalently
\begin{equation}
    \mu(s) \, k'(s) \cdot u^* + \psi\left(u^*\right)-\frac{1}{2}A\, \mu(s) \, k'(s) + c = 0,
\end{equation}
in the notation of \eqref{optimal_control}. After rearranging, we obtain
\begin{equation}
    (c + \psi(u^*)) \cdot \frac{2}{A-2u^*} = \mu(s)\, k'(s),
\end{equation}
which justifies the last equality in \eqref{chain_for_J}. 
\end{proof}

\section{Constrained problem}\label{section_constrained_problem}

As we have already pointed out, one of the main features of the results of previous sections is that they provide an explicit characterization of the optimal control process and stopping time, even though the problem under consideration is posed on an infinite time horizon and without discounting. It turns out that similarly explicit characterizations can be obtained in an analogous problem with extra constraints on the possible duration of the game. More precisely, we consider the minimization problem \eqref{value_function}, but impose the additional restriction that the expectation of available stopping times is bounded by some non-negative constant $\alpha$. Such an assumption aims to model real-life situations in which one cannot allow the system to run forever but, at the same time, does not want to implement a fixed time-horizon restriction, due to some uncertainty of planning.

Before discussing the constrained version of our problem, we want to mention some results available on optimal stopping problems with expectation constraints. Such problems were introduced by Kennedy in his seminal work \cite{Kennedy82}, where it was shown that, under appropriate assumptions, constrained problems can be reduced to unconstrained problems of optimal stopping but with the presence of a linear cost per unit of elapsed time. Surprisingly, despite their applicability and mathematical attractiveness, such problems did not receive much attention in the literature afterward. However, in recent years, a number of interesting results appeared in such a context. We refer the reader to the works of Ankirchner et al. \cite{AnkKazTanKleKru19}, \cite{AnkKleKru19}, Christensen et al. \cite{ChrKleSch23}, Bayraktar \& Yao \cite{BayYao20}, and to the very recent work \cite{BayYao23} by the same authors, where a constrained version of a problem which involves both stochastic control and optimal stopping is considered. We also refer the reader to the works of Bayraktar \& Miller \cite{BayMil19} and Källblad \cite{kallblad22}, where, in the context of optimal stopping, constraints are imposed on the distribution of the available stopping times.

We are now ready to formulate and study the constrained version of the problem \eqref{value_function}. We will follow the approach of Kennedy \cite{Kennedy82}.

\subsection{Description of the model}

We fix a parameter $\alpha \ge 0$ and place ourselves in the setup of Section \ref{section_model}. Without loss of generality, we focus attention again on the case $x \ge 0$ and take the notation from Section \ref{section_model} with the following necessary changes.

For a given ``state process'' $X(\cdot)$ and corresponding filtration $\mathbb{F}^X$, we denote by $\mathcal{T}^X_\alpha$ the collection of all $\mathbb{F}^X$--stopping times $\tau$ that satisfy $\ex[\tau] \le \alpha$. Now, the controller solves the following constrained optimization problem. \textit{For a given initial position $x \in \mathbb{R}$, find an element $X^\dag(\cdot) \in \mathcal{A}(x)$ and an associated  stopping time $\tau^\dag \in \mathcal{T}^{X^\dag}_\alpha$, such that the pair $(X^\dag(\cdot), \, \tau^\dag)$ minimizes the total expected cost}
\begin{equation}\label{total_cost_constrained}
    J(x, X(\cdot), \tau, c) \coloneqq \mathbb{E}\left[k(X(\tau))+\int_{0}^{\tau} \psi(u(t)) \, dt + c \tau\right]
\end{equation}
over all $X(\cdot) \in \mathcal{A}(x)$ and $\tau \in \mathcal{T}^X_\alpha$. 
Here, we take $c \ge 0$ and impose the same assumptions on the functions $k(\cdot)$ and $\psi(\cdot)$ as in Section \ref{section_model}.

We denote the value function of this problem by
\begin{equation}\label{value_function_constrained}
    V_\alpha(x, c) \coloneqq \inf\limits_{\substack{X(\cdot) \in \mathcal{A}(x) \\ \tau \in \mathcal{T}^X_\alpha}} J(x, X(\cdot), \tau, c).
\end{equation}

\begin{remark}
    Note that, unlike in the notation of Section \ref{section_model}, we include here the parameter $c$ to the variables of the expected payoff function \eqref{total_cost_constrained} and the value function \eqref{value_function_constrained}. It is clear that these functions indeed depend on the parameter $c$. The convenience, indeed the necessity, of such inclusion, will be clear soon.
\end{remark}

\begin{remark}
    We also note that the original problem \eqref{value_function} can be embedded into the one above by setting $\alpha = \infty$. This way, we have
    \begin{equation}
        V(x) = V_\infty(x, c) = \lim_{\alpha \to \infty} \downarrow V_\alpha(x, c).
    \end{equation}
\end{remark}

\subsection{Dual problem}

To solve the problem \eqref{value_function_constrained}, it will be convenient to use duality arguments, i.e., to treat \eqref{value_function_constrained} as a constrained \textit{primal} problem, and introduce a \textit{dual} unconstrained problem as follows.

For any initial position $x \ge 0$,  Lagrange multiplier $\lambda \ge 0$, admissible $X(\cdot) \in \mathcal{A}(x)$, and stopping time $\tau \in \mathcal{T}^X$, consider the Lagrangian $J(x, X(\cdot), \tau, c + \lambda) - \lambda \alpha$. Observe that we have
\begin{equation}\label{sup_lagrangian}
    \sup_{\lambda \ge 0}
    \Big(
        J(x, X(\cdot), \tau, c + \lambda) - \lambda \alpha 
    \Big) =
    \begin{cases}
        \infty, & \mathbb{E}[\tau] > \alpha, \\
        J(x, X(\cdot), \tau, c), & \mathbb{E}[\tau] \le \alpha,
    \end{cases}
\end{equation}
and, consequently, we obtain
\begin{equation}\label{duality_gap}
    \begin{split}
        V_\alpha(x, c)
        &=
        \inf\limits_{\substack{X(\cdot) \in \mathcal{A}(x) \\ \tau \in \mathcal{T}^X}} 
        \left(
        \sup_{\lambda \ge 0}
        \Big(
            J(x, X(\cdot), \tau, c + \lambda) - \lambda \alpha 
        \Big)
        \right)
        \\&\ge
        \sup_{\lambda \ge 0}
        \left(
        \inf\limits_{\substack{X(\cdot) \in \mathcal{A}(x) \\ \tau \in \mathcal{T}^X}} 
        \Big(
            J(x, X(\cdot), \tau, c + \lambda) - \lambda \alpha 
        \Big)
        \right)
        \eqqcolon
        \Psi_\alpha(x, c),
    \end{split}
\end{equation}
because $\inf \sup$ always dominates $\sup \inf$. From now onwards, we call the optimization problem with value $V_\alpha(x, c)$ \textit{primal}, and the optimization problem with value $\Psi_\alpha(x, c)$ \textit{dual}. 

\begin{remark}
    Note that the problem with value $\Psi_\alpha(x, c)$ in \eqref{duality_gap} is the problem of maximizing the function
    \begin{equation}
        f_\alpha(x, c, \lambda) 
        \coloneqq 
        \inf\limits_{\substack{X(\cdot) \in \mathcal{A}(x) \\ \tau \in \mathcal{T}^X}} 
        \Big(
            J(x, X(\cdot), \tau, c + \lambda) - \lambda \alpha 
        \Big)
        =
        V_\infty(x, c + \lambda) - \lambda \alpha
    \end{equation}
    over $\lambda \ge 0$, i.e., 
    \begin{equation}\label{remark_conv_conj}
        \Psi_\alpha(x, c) = \sup_{\lambda \ge 0} \Big(V_\infty(x, c + \lambda) - \lambda \alpha \Big).
    \end{equation}
    We recall at this point the notion of \textit{convex conjugate} for a given function $g: [0, \infty) \to \mathbb{R}$:
    \begin{equation}
        g^*(\beta) \coloneqq \sup_{\lambda \ge 0} \Big(g(\lambda) - \lambda \beta\Big), \quad \beta \ge 0.
    \end{equation}
    It is clear now from \eqref{duality_gap}, \eqref{remark_conv_conj} that $\alpha \mapsto \Psi_\alpha(x, c)$ is the convex conjugate of the function $V_\infty(x, c + \cdot)$.
\end{remark}

To solve the constrained problem, it is sufficient to show that there is no so-called ``duality gap'', meaning that the inequality in \eqref{duality_gap} is actually equality; and then to compute the function $\Psi_\alpha(x, c)$, which we can do since we have 
already solved the unconstrained problem. We first establish sufficient conditions for the absence of a duality gap in \eqref{duality_gap}, and then, in the next subsection, solve the constrained problem by checking that these conditions are satisfied.

The following result is an adaptation of Theorem 4 in \cite{Kennedy82} to our setting.

\begin{Prop}\label{prop_duality_gap}
    Fix the parameters $x, c$ and $\alpha$.
    Suppose there exist a stopping time $\tau^\dag$ and a constant $\bar{\lambda} \ge 0$, such that the inequality $\ex[\tau^\dag] \le \alpha$ and the ``complementary slackness'' condition $\bar{\lambda}(\alpha - \ex[\tau^\dag]) = 0$ hold. 
    If
    \begin{equation}\label{prop_techn_equality}
        \inf\limits_{\substack{X(\cdot) \in \mathcal{A}(x) \\ \tau \in \mathcal{T}^X}}
            J\left(x, X(\cdot), \tau, c + \bar{\lambda}\right)
        =
        J\left(x, X^\dag(\cdot), \tau^\dag, c + \bar{\lambda}\right)
    \end{equation}
    also holds for some controlled process $X^\dag(\cdot) \in \mathcal{A}(x)$, i.e., if the pair $\left(X^\dag(\cdot), \tau^\dag\right)$ is optimal for the unconstrained problem of \eqref{value_function} with cost $c + \bar{\lambda}$ per unit of time, then $\left(X^\dag(\cdot), \tau^\dag\right)$ is optimal for the constrained problem \eqref{value_function_constrained}. Moreover, we have then the equality $\Psi_\alpha(x, c) = V_\alpha(x, c)$, meaning that there is no ``duality gap'' in \eqref{duality_gap}.
\end{Prop}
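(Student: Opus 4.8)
The plan is to establish the proposition via a Lagrangian ``sandwich'': we already have the weak-duality inequality $V_\alpha(x,c) \ge \Psi_\alpha(x,c)$ from \eqref{duality_gap}, so it suffices to produce the reverse bound $V_\alpha(x,c) \le \Psi_\alpha(x,c)$. I would obtain this reverse bound by showing that the distinguished feasible pair $\big(X^\dag(\cdot), \tau^\dag\big)$ has primal cost $J\big(x, X^\dag(\cdot), \tau^\dag, c\big)$ squeezed between $\Psi_\alpha(x,c)$ and $V_\alpha(x,c)$, which will collapse the whole chain to equalities and deliver both conclusions at once.

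The first ingredient is the elementary observation that the payoff \eqref{total_cost_constrained} is affine in its cost argument: for any admissible pair and any $c'$,
\[
    J\big(x, X(\cdot), \tau, c'\big) = J\big(x, X(\cdot), \tau, c\big) + (c'-c)\,\ex[\tau].
\]
Next I would bound $\Psi_\alpha(x,c)$ from below by discarding the outer supremum in \eqref{duality_gap} in favour of the single multiplier $\lambda = \bar{\lambda}$, and then invoking the optimality hypothesis \eqref{prop_techn_equality}, which identifies the ensuing infimum with the value at $\big(X^\dag(\cdot), \tau^\dag\big)$:
\[
    \Psi_\alpha(x,c) \ge \inf_{\substack{X(\cdot) \in \mathcal{A}(x) \\ \tau \in \mathcal{T}^X}} \Big( J\big(x, X(\cdot), \tau, c + \bar{\lambda}\big) - \bar{\lambda}\alpha \Big) = J\big(x, X^\dag(\cdot), \tau^\dag, c + \bar{\lambda}\big) - \bar{\lambda}\alpha.
\]
Applying the affine identity with $c' = c + \bar{\lambda}$ and then the complementary-slackness hypothesis $\bar{\lambda}\big(\alpha - \ex[\tau^\dag]\big) = 0$ makes the multiplier terms cancel, leaving $\Psi_\alpha(x,c) \ge J\big(x, X^\dag(\cdot), \tau^\dag, c\big)$. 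Finally, since $\ex[\tau^\dag] \le \alpha$ guarantees $\tau^\dag \in \mathcal{T}^{X^\dag}_\alpha$, the pair is admissible for the primal problem, whence $J\big(x, X^\dag(\cdot), \tau^\dag, c\big) \ge V_\alpha(x,c)$ by definition \eqref{value_function_constrained}. Threading these through weak duality gives
\[
    V_\alpha(x,c) \ge \Psi_\alpha(x,c) \ge J\big(x, X^\dag(\cdot), \tau^\dag, c\big) \ge V_\alpha(x,c),
\]
so every inequality is in fact an equality, which is exactly the absence of a duality gap together with the optimality of $\big(X^\dag(\cdot), \tau^\dag\big)$ for the constrained problem \eqref{value_function_constrained}.

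This argument is essentially formal, so I do not expect a serious obstacle; the delicate point is simply the bookkeeping — keeping the affine-in-$c$ decomposition, the complementary-slackness cancellation, and the feasibility step in the correct order, and making sure the pair supplied by the unconstrained optimizer is genuinely primal-admissible (which is precisely what $\ex[\tau^\dag] \le \alpha$ secures). I emphasize that the existence of a suitable multiplier $\bar{\lambda}$ and feasible $\tau^\dag$ is \emph{assumed} here rather than proved; the real work — verifying these hypotheses in the concrete diffusion setting by choosing $\bar{\lambda}$ to match the constraint and reading off $\tau^\dag$ from the solved unconstrained problem with cost $c + \bar{\lambda}$ per unit time — is what the following subsection must carry out.
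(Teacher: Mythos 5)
Your proposal is correct and follows essentially the same route as the paper: the chain $\Psi_\alpha(x,c) \ge J(x, X^\dag(\cdot), \tau^\dag, c+\bar{\lambda}) - \bar{\lambda}\alpha = J(x, X^\dag(\cdot), \tau^\dag, c) \ge V_\alpha(x,c)$, combined with weak duality, is exactly the paper's second display, with the affine-in-$c$ identity and complementary slackness playing the same roles. The only cosmetic difference is that the paper verifies optimality of $(X^\dag(\cdot), \tau^\dag)$ by a separate direct comparison against an arbitrary feasible pair, whereas you extract it from the collapsed sandwich; both are valid.
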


\begin{proof}
    First, we show that, under the assumptions of the proposition, the pair $(X^\dag(\cdot), \tau^\dag)$ is indeed optimal for the constrained problem \eqref{value_function_constrained}. Consider any admissible pair $(X(\cdot), \tau)$ for the constrained problem. Then the following chain of inequalities
    \begin{equation}
        \begin{split}
            J\left(x, X(\cdot), \tau, c\right)
            &\ge 
            J\left(x, X(\cdot), \tau, c + \bar{\lambda}\right) - \bar{\lambda} \alpha
            \\&\ge
            \inf\limits_{\substack{X(\cdot) \in \mathcal{A}(x) \\ \tau \in \mathcal{T}^X}} 
            J\left(x, X(\cdot), \tau, c + \bar{\lambda}\right) - \bar{\lambda} \alpha
            \\&=
            J\left(x, X^\dag(\cdot), \tau^\dag, c +\bar{\lambda}\right) - \bar{\lambda} \alpha
            \\&=
            J\left(x, X^\dag(\cdot), \tau^\dag, c\right)
        \end{split}
    \end{equation}
    proves the optimality of $(X^\dag(\cdot), \tau^\dag)$.
    Here, the first inequality follows from the fact that $\bar{\lambda} \ge 0$ and $\ex[\tau] \le \alpha$, the first equality is a consequence of the assumption \eqref{prop_techn_equality}, and the second equality is a consequence of the assumption $\bar{\lambda}(\alpha - \ex[\tau^\dag]) = 0$.

    The equality $\Psi_\alpha(x, c) = V_\alpha(x, c)$ is a consequence of the inequality \eqref{duality_gap} and of the reverse inequality
    \begin{equation}
    \begin{split}
        \Psi_\alpha(x, c) 
        &= 
        \sup_{\lambda \ge 0}
        \left(
        \inf\limits_{\substack{X(\cdot) \in \mathcal{A}(x) \\ \tau \in \mathcal{T}^X}} \Big(
            J(x, X(\cdot), \tau, c + \lambda) - \lambda \alpha 
        \Big)
        \right)
        \\&\ge
        \inf\limits_{\substack{X(\cdot) \in \mathcal{A}(x) \\ \tau \in \mathcal{T}^X}} \Big(
            J(x, X(\cdot), \tau, c + \bar{\lambda}) - \bar{\lambda} \alpha 
        \Big)
        \\&=
        J(x, X^\dag(\cdot), \tau^\dag, c + \bar{\lambda}) - \bar{\lambda} \alpha
        \\&=
        J(x, X^\dag(\cdot), \tau^\dag, c)
        \ge
        V_\alpha(x, c),
    \end{split}
    \end{equation}
    where the third equality follows again by the choice of $\bar{\lambda}$ and $\tau^\dag$.
\end{proof}

\subsection{Solution of the constrained problem}

Recall the unconstrained optimal stopping problem \eqref{value_function} and denote, for the purposes of this section, its value function by $V(x, c)$. Recall also from Theorem \ref{main_result} that the optimal stopping boundary for this problem is the unique solution $s_c$ of the equation \eqref{equation_for_stopping_point} (if the solution exists, and $\infty$ if not). Here the subscript highlights the dependence of the stopping boundary on the parameter $c$ of the problem. In other words, in the notation of \eqref{def_zeta_function}, we have $s_c = \zeta^{-1}(-c)$ with the convention $s_c = \infty$ if $-c \notin \text{Im}(\zeta)$.

Recall the expression for the optimal control process \eqref{optimal_control}, which also depends on the parameter $c$ and will be denoted from now onwards by
\begin{equation}\label{optimal_control_2}
    u^*_c(t) = Q(s_c) = \left(\psi'\right)^{-1}\left(-\mu(s_c)\, k'(s_c) \right) = \upsilon(c), \quad \forall \,\, 0 \le t < \infty,
\end{equation}
where we define the function $\upsilon: [0, \infty) \to (-\infty, 0]$ by 
\begin{equation}\label{def_upsilon_function}
    \upsilon(z) \coloneqq Q(\zeta^{-1}(-z)) = \left(\psi'\right)^{-1}\left(-\mu\left(\zeta^{-1}(-z)\right) k'\left(\zeta^{-1}(-z)\right)\right).
\end{equation}

Finally, recall the expression \eqref{optimal_stopping_time} for the optimal stopping time, which we will denote here by $\tau^*_{x, c}$; as well as the expression \eqref{stop_time_expectation_formula} for the expectation of $\tau^*_{x, c}$, which in the new notation becomes
\begin{equation}\label{stop_time_expectation_formula_2}
    \mathbb{E}\left[\tau^*_{x, c}\right] 
    =
    \frac{2}{A - 2u^*_c}\int_{s_c}^x \frac{dy}{\mu(y)} \vee 0
    = 
    \frac{2}{A - 2\upsilon(c)}\int_{\zeta^{-1}(-c)}^x \frac{dy}{\mu(y)} \vee 0.
\end{equation}

We note at this point that the functions $\zeta(\cdot)$ in \eqref{def_zeta_function} and $\upsilon(\cdot)$ in \eqref{def_upsilon_function} are strictly decreasing, due to the monotonicity of the functions $\eta(\cdot), \mu(\cdot)$ and the convexity of the functions $\psi(\cdot), k(\cdot)$. Moreover, this implies that the expectation of $\tau^*_{x, c}$ in \eqref{stop_time_expectation_formula_2} is also a decreasing function of $c$.

\begin{remark}
    The purpose of the new notation is to emphasize the dependence of various expressions on the parameter $c$, and, moreover, to stress that these expressions can be represented as functions of $c$.
\end{remark}

We are now ready to state the result about the constrained problem.

\begin{Th}
    For a fixed $\alpha \ge 0$ and a pair $(x, c) \in [0, \infty) \times [0, \infty)$, consider the constrained problem $V_\alpha(x, c)$ of \eqref{value_function_constrained}. 
    For each $\lambda \ge 0$, consider the unconstrained problem $V(x, c + \lambda)$ of \eqref{value_function}, and let 
    $\left(X^*_{x, \, c + \lambda}(\cdot), \tau^*_{x, \, c + \lambda}\right)$ 
    be the optimal pair in the corresponding problem, given by the expressions \eqref{optimal_control} and \eqref{optimal_stopping_time}. 
    Denote by 
    \begin{equation}\label{lagrange_multiplier}
        \widehat{\lambda}_{x, c} \coloneqq \inf \left\{\lambda \in [0, \infty): \ex\left[\tau^*_{x, \, c + \lambda}\right] \le \alpha \right\}
    \end{equation}
    the smallest value of the Lagrange multiplier, for which the expectation of the optimal stopping time for the unconstrained problem with cost parameter $c + 
    \lambda$ is at most $\alpha$. 
    Then 
    \begin{equation}
        V_\alpha(x, c) = V\left(x, c + \widehat{\lambda}_{x, c}\right) - \widehat{\lambda}_{x, c} \cdot \alpha,
    \end{equation}
    and the pair $\left(X^*_{x, \, c + \widehat{\lambda}_{x, c}}(\cdot), \tau^*_{x, \, c + \widehat{\lambda}_{x, c}}\right)$ is optimal for the constrained problem.
\end{Th}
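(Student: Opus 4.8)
The plan is to obtain this theorem as a direct application of Proposition \ref{prop_duality_gap}, taking the Lagrange multiplier to be $\bar{\lambda} = \widehat{\lambda}_{x,c}$ and the candidate pair to be $(X^\dag(\cdot), \tau^\dag) = \big(X^*_{x,\,c+\widehat{\lambda}_{x,c}}(\cdot), \tau^*_{x,\,c+\widehat{\lambda}_{x,c}}\big)$; everything then reduces to verifying the three hypotheses of that proposition. The optimality requirement \eqref{prop_techn_equality}—that this pair be optimal for the unconstrained problem with running cost $c + \widehat{\lambda}_{x,c}$—holds verbatim by Theorem \ref{main_result}, so the genuine work concerns the admissibility and complementary-slackness conditions, both of which hinge on the behaviour of the map $\lambda \mapsto F(\lambda) \coloneqq \ex\big[\tau^*_{x,\,c+\lambda}\big]$.

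First I would establish that $\widehat{\lambda}_{x,c}$ is well defined and finite. By the explicit formula \eqref{stop_time_expectation_formula_2}, $F$ is a decreasing function of $\lambda$ (monotonicity having already been recorded after \eqref{stop_time_expectation_formula_2}) and is continuous, the continuity following from that of $\zeta^{-1}(\cdot)$, $\upsilon(\cdot)$ and $\mu(\cdot)$. As $\lambda \to \infty$ the stopping level $s_{c+\lambda} = \zeta^{-1}(-(c+\lambda))$ increases and eventually exceeds $x$ (or becomes $+\infty$, by our convention), at which point the integral in \eqref{stop_time_expectation_formula_2} is non-positive and the $\vee\,0$ forces $F(\lambda) = 0 \le \alpha$. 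Hence the set $\{\lambda \ge 0 : F(\lambda) \le \alpha\}$ is nonempty; being the preimage of $(-\infty, \alpha]$ under the continuous map $F$, it is closed, so its infimum $\widehat{\lambda}_{x,c}$ belongs to it. This simultaneously yields the admissibility requirement $\ex[\tau^\dag] = F(\widehat{\lambda}_{x,c}) \le \alpha$.

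The complementary-slackness identity $\widehat{\lambda}_{x,c}\big(\alpha - F(\widehat{\lambda}_{x,c})\big) = 0$ is where I expect the only real subtlety. If $\widehat{\lambda}_{x,c} = 0$ it holds trivially. If $\widehat{\lambda}_{x,c} > 0$, then by the definition of the infimum $F(\lambda) > \alpha$ for every $\lambda < \widehat{\lambda}_{x,c}$; letting $\lambda \uparrow \widehat{\lambda}_{x,c}$ and invoking continuity of $F$ gives $F(\widehat{\lambda}_{x,c}) \ge \alpha$, which together with the membership $F(\widehat{\lambda}_{x,c}) \le \alpha$ established above forces $F(\widehat{\lambda}_{x,c}) = \alpha$, so the product vanishes. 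The main obstacle is thus precisely the regularity of $F$—in particular its continuity across the threshold where $s_{c+\lambda}$ passes through $x$ (handled by the $\vee\,0$, since the integral crosses $0$ continuously) and, when relevant, across the value of $\lambda$ at which $s_{c+\lambda}$ becomes infinite; these are the points one must check to guarantee both that the infimum is attained and that the slackness identity is exact.

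With the three hypotheses in hand, Proposition \ref{prop_duality_gap} at once delivers the optimality of $(X^\dag(\cdot), \tau^\dag)$ for the constrained problem together with the absence of a duality gap, $V_\alpha(x,c) = \Psi_\alpha(x,c)$. For the explicit value I would reuse the short chain internal to the proof of that proposition: complementary slackness converts the running cost via $J\big(x, X^\dag(\cdot), \tau^\dag, c + \widehat{\lambda}_{x,c}\big) = J\big(x, X^\dag(\cdot), \tau^\dag, c\big) + \widehat{\lambda}_{x,c}\,\ex[\tau^\dag] = J\big(x, X^\dag(\cdot), \tau^\dag, c\big) + \widehat{\lambda}_{x,c}\,\alpha$, while optimality in the two problems gives $J\big(x, X^\dag(\cdot), \tau^\dag, c+\widehat{\lambda}_{x,c}\big) = V\big(x, c+\widehat{\lambda}_{x,c}\big)$ and $J\big(x, X^\dag(\cdot), \tau^\dag, c\big) = V_\alpha(x,c)$. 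Rearranging these relations yields $V_\alpha(x,c) = V\big(x, c+\widehat{\lambda}_{x,c}\big) - \widehat{\lambda}_{x,c}\,\alpha$, exactly as claimed.
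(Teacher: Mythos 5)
Your proof is correct and follows the same skeleton as the paper's: verify the three hypotheses of Proposition \ref{prop_duality_gap} for $\bar\lambda = \widehat{\lambda}_{x,c}$ and the pair $\bigl(X^*_{x,\,c+\widehat{\lambda}_{x,c}}(\cdot), \tau^*_{x,\,c+\widehat{\lambda}_{x,c}}\bigr)$, with the well-posedness of $\widehat{\lambda}_{x,c}$ and the complementary-slackness identity resting on the monotonicity and continuity of $\lambda \mapsto \ex[\tau^*_{x,c+\lambda}]$ read off from \eqref{stop_time_expectation_formula_2}. Your treatment of complementary slackness is in fact slightly more careful than the paper's, which simply asserts that $\widehat{\lambda}_{x,c}$ is either $0$ or the root of $\ex[\tau^*_{x,c+\lambda}]=\alpha$; your continuity argument (the set $\{F\le\alpha\}$ is closed, and $F(\lambda)>\alpha$ for $\lambda<\widehat{\lambda}_{x,c}$ forces $F(\widehat{\lambda}_{x,c})=\alpha$ when $\widehat{\lambda}_{x,c}>0$) supplies exactly the justification the paper leaves implicit. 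The one genuine divergence is the final identity $V_\alpha(x,c) = V\bigl(x,c+\widehat{\lambda}_{x,c}\bigr) - \widehat{\lambda}_{x,c}\,\alpha$: the paper obtains it by representing $V_\alpha(x,c)=\Psi_\alpha(x,c)=\sup_{\lambda\ge0}\bigl(V(x,c+\lambda)-\lambda\alpha\bigr)$ and locating the maximizer via the envelope equation \eqref{envelope_equation}, whose verification it explicitly omits; you instead extract the identity directly from the equality chain
$V\bigl(x,c+\widehat{\lambda}_{x,c}\bigr) = J\bigl(x,X^\dag,\tau^\dag,c+\widehat{\lambda}_{x,c}\bigr) = J\bigl(x,X^\dag,\tau^\dag,c\bigr) + \widehat{\lambda}_{x,c}\,\ex[\tau^\dag] = V_\alpha(x,c)+\widehat{\lambda}_{x,c}\,\alpha$,
using only complementary slackness and the two optimality statements. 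This is shorter, avoids any differentiation of the value function in $c$, and is already implicit in the second display of the proof of Proposition \ref{prop_duality_gap}; it buys you a complete argument where the paper defers a computation.
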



\begin{proof}
Fix $\alpha \ge 0, \, x \ge 0, \, c \ge 0$. First, we check that the Lagrange multiplier $\widehat{\lambda}_{x, c}$ in \eqref{lagrange_multiplier} is well defined, meaning that the set $\left\{\lambda \in [0, \infty): \ex\left[\tau^*_{x, \, c + \lambda}\right] \le \alpha \right\}$ is not empty. This follows from the fact that the expectation of the stopping time in \eqref{stop_time_expectation_formula_2} is a decreasing function of the parameter $c$ and tends to zero as $c$ tends to infinity. Thus, for any $\alpha \ge 0$ and a fixed starting position $x \ge 0$, the expectation of the corresponding stopping time $\tau^*_{x, c + \lambda}$ will be no greater than $\alpha$ for large enough $\lambda$.

Now we note that, with the functions $\zeta(\cdot)$ and $\upsilon(\cdot)$ defined in \eqref{def_zeta_function} and \eqref{def_upsilon_function}, respectively, $\widehat{\lambda}_{x, c}$ can alternatively be characterized as the unique $\lambda > 0$ that solves the equation
\begin{equation}
    \frac{2}{A - 2\upsilon(c + \lambda)}\int_{\zeta^{-1}(-c-\lambda)}^x \frac{dy}{\mu(y)} = \alpha,
\end{equation}
if a solution exists, and $\widehat{\lambda}_{x, c} = 0$ if the above equation doesn't have a solution in $(0, \infty)$. This, in turn, implies that, if we denote by $\left(X^*_{x, \, c + \widehat{\lambda}_{x, c}}(\cdot), \tau^*_{x, \, c + \widehat{\lambda}_{x, c}}\right)$ the optimal pair for the unconstrained problem with value $V\left(x, c + \widehat{\lambda}_{x, c}\right)$, we will have 
\begin{equation}
    \widehat{\lambda}_{x, c} \cdot \left(\alpha - \ex\left[\tau^*_{x, \, c + \widehat{\lambda}_{x, c}} \right] \right) = 0
\end{equation}
and, moreover, the triple $\left(\widehat{\lambda}_{x, c}, X^*_{x, \, c + \widehat{\lambda}_{x, c}}(\cdot), \tau^*_{x, \, c + \widehat{\lambda}_{x, c}} \right)$ will satisfy \eqref{prop_techn_equality}. As a result, Proposition \ref{prop_duality_gap} implies that the pair 
$\left(X^*_{x, \, c + \widehat{\lambda}_{x, c}}(\cdot), \tau^*_{x, \, c + \widehat{\lambda}_{x, c}}\right)$ is optimal for the constrained problem with value $V_\alpha(x, c)$.

It only remains to note that Proposition \ref{prop_duality_gap} also implies 
\begin{equation}
    V_\alpha(x, c) 
    =
    \Psi_\alpha(x, c) 
    =
    \sup_{\lambda \ge 0}
    \big(
        V(x, c + \lambda) - \lambda \alpha
    \big),
\end{equation}
as in \eqref{remark_conv_conj}, so we have 
\begin{equation}
    V_\alpha(x, c) 
    = 
    \sup_{\lambda \ge 0}
    \big(
        V(x, c + \lambda) - \lambda \alpha
    \big)
    =
    V\left(x, c + \widehat{\lambda}_{x, c}\right) - \widehat{\lambda}_{x, c} \cdot \alpha
\end{equation}
as a consequence of the so-called ``envelope equation''
\begin{equation}\label{envelope_equation}
    \frac{\partial}{\partial c} V(x, c) = \ex\left[ \tau^*_{x, c}\right].
\end{equation}
The above identity \eqref{envelope_equation} is a simple algebraic consequence of the explicit form of the value function \eqref{value_function_solution} and the explicit form of the expectation \eqref{stop_time_expectation_formula_2}; we omit the technical but straightforward details.
\end{proof}

\section{Open Questions}\label{section_open_questions}

We conclude by formulating three directions for further research of related problems, which we find interesting.

The first direction would be to explore game versions of the problem formulated in Section \ref{section_model}. More precisely, instead of assuming that the same person controls the system and chooses stopping time, one can consider the following setup. There exist two players: a controller, who chooses an admissible control and tries to minimize the expected payoff \eqref{value_function}, and a stopper, who picks a stopping time and tries to maximize the expected payoff \eqref{value_function}. The goal of the problem is again to prove the existence of a Nash equilibrium and, consequently, find all such equilibria. 

The second direction would be to follow the footsteps of Section \ref{section_constrained_problem}, but to impose different time ``restrictions'' on our problem. For instance, one can consider the same problem over a finite time horizon. However, in this case, we believe the solution will not be so explicit; in particular, we expect the optimal stopping time to be a hitting time of an appropriate moving boundary. Another option would be to keep an infinite time horizon but add a discounting factor (see, for instance, the interesting work of Bayraktar \& Young \cite{BayYou11}). 

Finally, the third and most ambitious direction would be to add extra uncertainty to our problem. Namely, one can try to generalize the previous direction by considering a control problem with discretionary stopping and random maturity, in conjunction with the work of Bayraktar \& Yao \cite{BayYao17}, where the control of the drift is essentially given by non-linear expectation. Or one can place the problem in the context of filtering theory.

\vspace{15pt}
\noindent
\textbf{Acknowledgments}

We thank the anonymous referee for valuable comments that helped to improve the exposition of the paper.
We also gratefully acknowledge support from the National Science Foundation under grant NSF-DMS-20-04997.

\begin{appendices}

\section{Controlling the variance}

In this section, we discuss briefly a possible extension of the results in Section \ref{section_main_results}, in which
the control affects both the drift and the dispersion of the diffusion process, always in the presence of discretionary stopping. More specifically, we consider the same problem as formulated in Section \ref{section_model}, but now the controlled diffusion process $X(\cdot)$ satisfies 
\begin{equation}\label{controlled_sde_with_variance}
    X(t) = x + \int_{0}^{t} u(r) \,dr + \int_{0}^{t} u(r)\sigma (X(r)) \, dW(r), \quad 0 \le t < \infty.
\end{equation}
The goal of the problem is again to minimize the total expected cost
\begin{equation}\label{total_cost_extension_section}
    J(x, X(\cdot), \tau) \coloneqq \mathbb{E}\left[k(X(\tau))+\int_{0}^{\tau} \psi(u(t)) \, dt + c \tau\right],
\end{equation}
where we use the same notation and impose the same assumptions as in Section \ref{section_model}, but with the following necessary change in the assumption \ref{ass_3}:
\begin{enumerate}[label = \textbf{(A\arabic*)$'$}]
\setcounter{enumi}{2}
    \item $\int_{0}^{t}\big[|u(r)|+\big(u(r)\, \sigma(X(r))\big)^{2}\big] \, dr < \infty$ almost surely, for all $0 \leq t < \infty$. \label{ass_3_ext}
\end{enumerate}
For technical reasons, we also need to impose the following additional assumption:
\begin{enumerate}[label = \textbf{(A\arabic*)}]
\setcounter{enumi}{7}
    \item Either we restrict ourselves to bounded control processes $u(\cdot)$, or we assume that the running cost function $\psi(\cdot)$ is super-quadratic, meaning that $\liminf_{x \to \infty} \left(\psi(x)/x^2\right) > 0$. \label{ass_8_ext}
\end{enumerate}

It turns out, that in this case the results of Theorem \ref{main_result} for the drift coefficient function $\mu(\cdot) \equiv 1$ can be transferred to the current setup without any changes. This follows from the simple observation that the value function defined by \eqref{value_function_solution} is linear and thus still satisfies the adjusted variational inequalities 
\begin{enumerate}[label = \textbf{(\roman*)$'$}]
    \item $k(x) - V(x) \ge 0$; \label{var_ineq_1_ext}
    \item $\min\limits_{u \in \mathbb{R}}\Big[\frac{1}{2}V''(x)\sigma^2(x)u^2 + u \,\mu(x)\, V'(x) + \psi(u) \Big] + c \ge 0$; \label{var_ineq_2_ext}
    \item $\Big( k(x) - V(x)\Big)\left( \min\limits_{u \in \mathbb{R}}\Big[\frac{1}{2}V''(x)\sigma^2(x)u^2 + u \,\mu(x)\, V'(x) + \psi(u) \Big] + c
    \right) = 0$. \label{var_ineq_3_ext}
\end{enumerate}
As a result, the proof of the Theorem \ref{main_result} can be repeated without major changes because of the following two facts. First, the assumption \ref{ass_8_ext} implies that the stochastic integral in the adjusted for the variance term representation \eqref{Ito} is again a square-integrable martingale. Hence, after the variance term adjustment, \eqref{value_function_as_lower_bound} still holds, which proves that $V(x)$ is indeed a lower bound on the achievable cost when the starting position is $X(0) = x$. Secondly, Proposition \ref{stopping_time_expectation} is also still valid, since the claim \eqref{stop_time_expectation_formula} doesn't depend on the variance coefficient of the diffusion process, and multiplication of $\sigma(\cdot)$ by a constant doesn't change the arguments in the proof.

\section{Proof of Proposition \ref{stopping_time_expectation}}\label{appendix_proof}

\begin{proof_prop_2}
We will derive the claim  \eqref{stop_time_expectation_formula} using a general expression for the expectation of one-sided hitting times for diffusion processes. 

First, we introduce the following notions. Fix $d > s$. We define the so-called \textit{scale function} of the diffusion \eqref{proposition_diffusion} by 
\begin{equation}\label{scale_function}
    p(x) \coloneqq \int_{d}^{x} \exp \left\{-2 \int_{d}^{\xi} \frac{u \, \mu(\zeta) d \zeta}{\sigma^{2}(\zeta)}\right\} d \xi, \quad x \in [s, \infty),
\end{equation}
which satisfies
\begin{equation}\label{PolSig_condition_a}
    p(\infty) = \int_{d}^{\infty} \exp \left\{-2 \int_{d}^{\xi} \frac{u \, \mu(\zeta) d \zeta}{\sigma^{2}(\zeta)}\right\} d \xi \ge \int_{d}^{\infty} \exp(0) \, d\xi = \infty.
\end{equation}
Here, the inequality follows from the non-negativity of $\mu(\cdot)$ and the fact that $u < 0$ by the assumption of the proposition.

We define also the so-called \textit{speed measure} of the diffusion \eqref{proposition_diffusion} by
\begin{equation}\label{speed_measure}
    m(d x) \coloneqq \frac{2 d x}{p^{\prime}(x) \sigma^{2}(x)}, \quad x \in [s, \infty),
\end{equation}
which, in its turn, satisfies 
\begin{equation}\label{PolSig_condition_b}
    m((s, \infty)) < \infty.
\end{equation}
To check \eqref{PolSig_condition_b}, we consider the cases $A = 0$ and $A > 0$ separately. If $A = 0$, we have already shown (see the discussion right before the bound \eqref{second_upper_bound_b}) that the function $\mu(\cdot)$ satisfies $\mu(x) \equiv \mu$ for all $x > 0$ and some positive real constant $\mu$. Therefore, we obtain \eqref{PolSig_condition_b} by the following computation:
\begin{align}\label{full_measure_zero_case}
        m((s, \infty)) &= \int_s^\infty 2\, \sigma^{-2}(x) \, \exp\left\{2 \int_{d}^{x} \frac{u \mu \, d \zeta}{\sigma^{2}(\zeta)}\right\}\, dx
        =
        \frac{1}{u\mu}\exp\left\{2u\mu \int_{d}^{x} \frac{ d \zeta}{\sigma^{2}(\zeta)}\right\} \Bigg|_s^\infty
        \\ &= \frac{1}{u\mu}\exp\left\{2u\mu \int_{d}^{\infty} \frac{ d \zeta}{\sigma^{2}(\zeta)}\right\} - \frac{1}{u\mu}\exp\left\{2u\mu \int_{d}^{s} \frac{ d \zeta}{\sigma^{2}(\zeta)}\right\} 
        = 
        - \frac{1}{u\mu}\exp\left\{2u\mu \int_{d}^{s} \frac{ d \zeta}{\sigma^{2}(\zeta)}\right\} < \infty,
\end{align}
where the last equality follows from the assumption \ref{unf_assum} and the fact that $\mu > 0$, while $u < 0$.

To obtain \eqref{PolSig_condition_b} in the case $A > 0$, we first rewrite the scale function and speed measure in a more convenient way. Namely, using \ref{ass_4}, we get
\begin{equation}
    \begin{split}
        p(x) &=
        \int_{d}^{x} \exp \left\{-2 \int_{d}^{\xi} \frac{u \, \mu(\zeta) d \zeta}{\sigma^{2}(\zeta)}\right\} d \xi
        =
        \int_{d}^{x} \exp \left\{-2 \int_{d}^{\xi} \frac{u\, \mu'(\zeta) \, d \zeta}{A\mu(\zeta)}\right\} d \xi
        \\ &=
        \int_{d}^{x} \exp \left\{-\frac{2u}{A} \log(\mu(\zeta) )\Big|_{d}^{\xi}\right\} d \xi
        =
        \int_{d}^{x} \exp \left\{ 
        \log\left(\left(\frac{\mu(\xi)}{\mu(d)} \right)^{-\frac{2u}{A}}\right)
        \right\} d \xi
        \\ &=
        \int_{d}^{x} \left(\frac{\mu(\xi)}{\mu(d)} \right)^{-\frac{2u}{A}} d \xi
        =
        \big(\mu(d)\big)^{\frac{2u}{A}} \cdot \int_{d}^{x} \big(\mu(\xi)\big)^{-\frac{2u}{A}} d \xi,
    \end{split}
\end{equation}
thus also
\begin{equation}
    p'(x) = \big(\mu(d)\big)^{\frac{2u}{A}} \cdot \big(\mu(x)\big)^{-\frac{2u}{A}},
\end{equation}
which leads to the expression 
\begin{equation}\label{scale_measure_special_form}
        m(dx) = 
        \frac{2dx}{p'(x)\sigma^2(x)}
        =
        \frac{2 \mu'(x) dx}{A p'(x)\mu^2(x)}
        =
        \frac{2}{A}\big(\mu(d)\big)^{-\frac{2u}{A}} \cdot \mu'(x) \cdot \big(\mu(x)\big) ^{\frac{2u}{A} - 2} \, dx
\end{equation}
for the speed measure of \eqref{speed_measure}.
Using the above representation of the speed measure, we get
\begin{equation}\label{full_measure_nonzero_case}
    \begin{split}
        m((s, \infty)) &= \int_s^\infty \frac{2}{A}\big(\mu(d)\big)^{-\frac{2u}{A}} \cdot \mu'(x) \cdot \big(\mu(x)\big) ^{\frac{2u}{A} - 2} \, dx
        \\ &=
        \frac{2}{2u - A} \big(\mu(d)\big)^{-\frac{2u}{A}} \, \big(\mu(x)\big) ^{\frac{2u}{A} - 1} \bigg|_s^\infty 
        \\&=
        -\frac{2}{2u - A} \big(\mu(d)\big)^{-\frac{2u}{A}} \, \big(\mu(s)\big) ^{\frac{2u}{A} - 1} < \infty,
    \end{split}
\end{equation}
where the third equality follows from the fact that in the case $A > 0$, we have $\mu(\infty) = \infty$ by the assumption \ref{unf_assum}, while $2u - A < 0$ as $u < 0$ and $A > 0$.

Once the properties \eqref{PolSig_condition_a} of the scale function and \eqref{PolSig_condition_b} of the speed measure have been established, we can use the results of Pollack \& Siegmund \cite{PolSig} (see also pp. 352-353 in \cite{BMSC}), which allow us to compute the expectation of the one-sided hitting time \eqref{one_sided_hitting_time} by the formula
\begin{equation}\label{stopping_time_bmsc_formula}
    \mathbb{E}[\tau] = -\int_{s}^{x}(p(x)-p(y)) m(d y)+(p(x)-p(s)) \cdot m((s, \infty)).
\end{equation}
It remains to show that the above expression is indeed equal to the right-hand side of \eqref{stop_time_expectation_formula}. 

For convenience of analysis and exposition, we fix $x \in [s, \infty)$ and introduce the functions
\begin{equation}\label{functions_h_g}
    h(y) \coloneqq p(x) - p(y) \quad \text{ and } \quad g(y) \coloneqq m((s, y)),
\end{equation}
so that the expectation in \eqref{stopping_time_bmsc_formula} can be written as
\begin{equation}
    \ex[\tau] = -\int_s^x h(y) \, dg(y) + h(s)g(\infty).
\end{equation}
Using integration by parts, we obtain
\begin{equation}\label{expectation_explicit}
    \ex[\tau] = -h(y) g(y) \Big|_s^x + \int_s^x h'(y)g(y)dy + h(s)g(\infty) = \int_s^x h'(y)g(y)dy + h(s)g(\infty),
\end{equation}
where the second equality follows from the consequence $h(x) = g(s) = 0$ of the definitions of these functions.

To proceed further, we compute the integral in the above expression. We need again to consider two cases. If $A = 0$, we obtain
\begin{equation}\label{expectation_explicit_1}
    \begin{split}
        \int_s^x h'(y)g(y)dy 
        &=
        -\int_s^x \exp \left\{-2 \int_{d}^{y}\frac{u \mu \, d \zeta}{\sigma^{2}(\zeta)}\right\} \cdot \left( \frac{1}{u\mu}\exp\left\{2u\mu \int_{d}^{x} \frac{ d \zeta}{\sigma^{2}(\zeta)}\right\} \Bigg|_s^y\right) dy
        \\ &=
        -\int_s^x \frac{dy}{u\mu} - \int_s^x h'(y) \cdot \frac{1}{u\mu}\exp\left\{2u\mu \int_{d}^{s} \frac{ d \zeta}{\sigma^{2}(\zeta)}\right\} \, dy
        \\ &= 
        -\frac{x-s}{u\mu} - h(s)g(\infty),
    \end{split}
\end{equation}
where the last equality follows from the computation \eqref{full_measure_zero_case} and the fact that $h(x) = 0$. 

If, on the other hand, $A > 0$, we get 
\begin{equation}\label{expectation_explicit_2}
    \begin{split}
        \int_s^x h'(y)g(y)dy 
        &= 
        -\int_s^x \left(\big(\mu(d)\big)^{\frac{2u}{A}} \cdot \big(\mu(y)\big)^{-\frac{2u}{A}}\right) \left(\frac{2}{2u - A} \big(\mu(d)\big)^{-\frac{2u}{A}} \, \big(\mu(x)\big) ^{\frac{2u}{A} - 1} \bigg|_s^y \right) dy
        \\&= 
        \frac{2}{A-2u} \int_s^x \frac{dy}{\mu(y)} - \int_s^x h'(y) \cdot \frac{2}{2u - A} \big(\mu(d)\big)^{-\frac{2u}{A}} \, \big(\mu(s)\big) ^{\frac{2u}{A} - 1} dy 
        \\&=
        \frac{2}{A-2u} \int_s^x \frac{dy}{\mu(y)} - h(s)g(\infty),
    \end{split}
\end{equation}
where the last equality follows from \eqref{full_measure_nonzero_case}, \eqref{functions_h_g}, and the fact that $h(x) = 0$.

Combining now \eqref{expectation_explicit}, \eqref{expectation_explicit_1} and \eqref{expectation_explicit_2}, and observing that in the case $A = 0$ we have 
\begin{equation}
    \frac{2}{A - 2u}\int_s^x \frac{dy}{\mu(y)} = -\frac{x-s}{u\mu}, \quad \quad \text{thus} \quad \quad
    \ex[\tau] = \frac{2}{A - 2u}\int_s^x \frac{dy}{\mu(y)},
\end{equation}
which justifies the claim \eqref{stop_time_expectation_formula}. \qed
\end{proof_prop_2}

\end{appendices}

\printbibliography

\end{document}